\documentclass[12pt, reqno, twoside, letterpaper]{amsart}

\usepackage{paperstyle}

\usepackage{graphicx}
\usepackage{float}
\usepackage{mathtools}
\usepackage{bbm}

\makeatletter
\def\section{\@startsection{section}{1}%
  \z@{2.5\linespacing\@plus\linespacing}{.5\linespacing}%
  {\normalfont\scshape\centering}}
\makeatother

\newcommand\chrone{{\chi_\circ}}
\newcommand\ochrone{{\overline\chi_\circ}}
\newcommand\qone{{q_\circ}}
\newcommand\Lone{{\sL_\circ}}

\newcommand{\blltaux}[2]{%
  \vcenter{\hbox{\scalebox{0.9}{$#1\bullet$}}}%
}
\newcommand{\bllt}{\mathpalette\blltaux{}}

\newcommand{\chrtwo}{\chi_{\bllt}}
\newcommand{\ochrtwo}{\overline{\chi}_{\bllt}}
\newcommand{\qtwo}{q_{\bllt}}
\newcommand\Ltwo{{\sL_{\bllt}}}

\title[Unshared zeros of Dirichlet $L$-functions]
{Unshared zeros of Dirichlet $L$-functions}
\author{William Banks}
\address{Department of Mathematics\\
University of Missouri\\
Columbia MO 65211\\
USA}
\email{bankswd@missouri.edu}

\author{Kyle Loftus}
\address{Department of Mathematics\\
University of Missouri\\
Columbia MO 65211\\
USA}
\email{krltc2@missouri.edu}

\keywords{Dirichlet $L$-functions, Dirichlet characters, nontrivial zeros, simple zeros, generalized Riemann hypothesis}
\subjclass[2020]{11M06, 11M26}

\date{\today}

\begin{document}

\begin{abstract}
We prove that no Dirichlet $L$-function (and more generally, no
nontrivial finite linear combination of Dirichlet $L$-functions) can
vanish at every zero of a fixed $L(s,\chrone)$. At the heart of the 
proof is a short-window asymptotic for the twisted discrete moment
$\sum_{\rho} x^{\rho} L(\rho,\chrtwo)$, where $\chrone\ne\chrtwo$ are
primitive Dirichlet characters, $\rho=\beta+i\gamma$ runs over zeros
of $L(s,\chrone)$ with $T-\Delta<\gamma\le T$, and $x\in\Z$. The
asymptotic is unconditional, assuming no hypothesis of GRH type, and it
holds for every window width
$\Delta\in[T\,\er^{-C\sqrt{\log T}},\,T/\log T]$,
thus reaching windows shorter than $T(\log T)^{-A}$ for any fixed $A$.
Notably, the main term
$\frac{\chrtwo(x)}{2\pi}\,\Delta\log T$ depends on $x$ only through the
single value $\chrtwo(x)$. Since distinct primitive characters are 
distinguished by their values, varying $x$ isolates the contribution
of each $L$-function within a linear combination, and we deduce that
for a positive density of $x\in\N$, every nontrivial combination is
nonzero at some zero of $L(s,\chrone)$ in any sufficiently high short
window. The proof combines contour integration of
$-\frac{L'}{L}(1-s,\ochrone)\,L(s,\chrtwo)$ with short-interval 
estimates for the Dirichlet convolution $(\chrone\Lambda)*\chrtwo$,
which derive from the classical de la Vall\'ee Poussin zero-free 
region.
\end{abstract}

\maketitle

\tableofcontents

\newpage{\Large\section{Introduction}}

\subsection{Background and motivation}
A recurring theme in analytic number theory is the study of discrete
moments of $L$-functions $L(s,\chi)=\sum\chi(n)n^{-s}$ ($\Re(s)>0$),
such as the sum
\[
\sum_{0<\gamma\le T}L(\rho,\chrtwo),\qquad
\rho=\beta+i\gamma\text{ a zero of }L(s,\chrone),
\]
in which two distinct primitive Dirichlet characters $\chrone$ and
$\chrtwo$ play asymmetric roles. Sums like this measure how one
$L$-function behaves at the zeros of another. Garunk\v stis, Kalpokas,
and Steuding~\cite{GKS} showed that the above sum is $o(T)$ as
$T\to\infty$, which is substantially less than the trivial bound
$\sim\frac{1}{2\pi}T\log(\qone T)$ supplied by the Riemann--von Mangoldt
formula. Garunk\v stis and Kalpokas~\cite{GK} went on to evaluate the
second moment $\sum_\rho|L(\rho,\chrtwo)|^2$, finding a main term of
size $T(\log T)^2$, and Laaksonen and Petridis~\cite{LP} examined
related questions of value distribution.

The standard approach to such sums goes back to Gonek's
evaluation~\cite{Gonek} of $\sum_\rho|\zeta'(\rho)|^2$ and to the work
of Conrey--Ghosh--Gonek~\cite{ConGhoGon} on gaps between zeros. Both
proceed by contour integration of $L'/L$ against a suitable kernel,
combined with the approximate functional equation. Fujii carried this
machinery into the Dirichlet setting in a sequence of
papers~\cite{Fujii_zetazeros, Fujii_zetazerosII, Fujii_observations},
treating sums such as $\sum_{0<\gamma\le T}x^\rho\,\zeta'(\rho)$ and
uncovering a striking arithmetic dichotomy in which the shape of the
main term depends on whether $x$ is rational or irrational.

A second motivation comes from independence phenomena among distinct
Dirichlet $L$-functions. Linnik~\cite{Linnik} observed that the
functional equation of $L(s,\chi)$ can be recovered from that of
$\zeta(s)$, and Sprind\v zuk~\cite{Sprind1, Sprind2} showed that GRH
for all Dirichlet $L$-functions follows from RH together with a
suitable hypothesis on the vertical distribution of zeros of
$\zeta(s)$. The Linnik--Sprind\v zuk phenomenon was developed further
by Fujii~\cite{Fujii_zetazeros, Fujii_zetazerosII,
Fujii_observations}, by Kaczorowski and Perelli~\cite{KP} in the
Selberg class, by Suzuki~\cite{Suzuki}, and recently by the first
author~\cite{Banks, Banks_GRHsingle, Banks_simple}. The common thread
is that fine information about the zeros of a single $L$-function can
be made to control the entire family. Two
questions arise naturally. Can $L(s,\chrtwo)$ vanish at every zero of
$L(s,\chrone)$? More generally, can some finite linear combination
$\sum_j a_j L(s,\chi_j)$ vanish identically on the zero set of
$L(s,\chrone)$?

\smallskip
\subsection{Statement of results}
Our main theorem answers both questions in the negative, and in
quantitative form. The mechanism is a short-window asymptotic for the
twisted moment $\sum_\rho x^\rho L(\rho,\chrtwo)$.

\begin{theorem}\label{thm:BLmain}
There is an absolute constant $C>0$ such that,
for every real $Q\ge 1$ and integer $x\ge 10$, one can
choose $T_0=T_0(Q,x)>0$ with the following property.
Let $\chrone$ and $\chrtwo$ be distinct
primitive characters with moduli $\qone$ and $\qtwo$,
respectively, satisfying $\qone,\qtwo\le Q$.
Then, for all $T\ge T_0$ and
$T\,\er^{-C\sqrt{\log T}}\le\Delta\le T/\log T$, one has
\be\label{eq:BLest}
\ssum{\rho=\beta+i\gamma\\T-\Delta<\gamma\le T}
x^\rho L(\rho,\chrtwo)
=\frac{\chrtwo(x)}{2\pi}\,\Delta\log T
+O(\Delta),
\ee
where the sum runs over the zeros $\rho$ of $L(s,\chrone)$, each
counted with its multiplicity,
and the implied constant depends only on $Q$ and $x$.
\end{theorem}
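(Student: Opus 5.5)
The plan follows the Gonek/Fujii contour method indicated in the abstract. Let $\mathcal R$ be the rectangle with vertices $a+i(T-\Delta)$, $a+iT$, $1-a+iT$, $1-a+i(T-\Delta)$, where $a=1+1/\log T$, with the horizontal edges shifted by $O(1)$ so that they stay at distance $\gg1/\log T$ from the ordinates of zeros of $L(s,\chrone)$. Since $L(s,\chrone)$ and $L(s,\ochrone)$ have conjugate zero sets, $\rho$ is a zero of $L(s,\chrone)$ exactly when $1-\rho$ is a zero of $L(s,\ochrone)$. Hence the function $-\frac{L'}{L}(1-s,\ochrone)$ has simple poles with residue equal to the multiplicity at each such $\rho$, and
\[
\sum_{T-\Delta<\gamma\le T}x^\rho L(\rho,\chrtwo)=\frac{1}{2\pi i}\oint_{\mathcal R}\Bigl(-\frac{L'}{L}(1-s,\ochrone)\Bigr)x^sL(s,\chrtwo)\,ds .
\]
I use this orientation, rather than $L'/L(s,\chrone)$, so that the left edge $\Re s=1-a$ is the one where $L'/L(1-s,\ochrone)$ converges absolutely. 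The horizontal edges contribute $O(\log^2T\cdot T^{1/2+\epsilon})$ by standard bounds: $L'/L\ll\log^2T$ on lines that avoid zeros, together with convexity for $L(s,\chrtwo)$. This is $O(\Delta)$ for $\Delta\ge T\er^{-C\sqrt{\log T}}$.

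\textbf{Right edge} ($\Re s=a$). Apply the functional equation $\frac{L'}{L}(1-s,\ochrone)=-\frac{L'}{L}(s,\chrone)+\log\frac{\qone}{2\pi}+\psi$-terms. The $\psi$-terms equal $-\log\frac{t}{2\pi}+O(1/t)$. So the integrand becomes
\[
\Bigl(\log\tfrac{t}{2\pi}+O(1)\Bigr)x^sL(s,\chrtwo)+\frac{L'}{L}(s,\chrone)x^sL(s,\chrtwo).
\]
Expanding $x^sL(s,\chrtwo)=\sum_n\chrtwo(n)(x/n)^s$, the term $n=x$ integrates to $\chrtwo(x)\int_{T-\Delta}^T\log\frac{t}{2\pi}\,dt/2\pi=\frac{\chrtwo(x)}{2\pi}\Delta\log T+O(\Delta)$; here $x$ is an integer, which is why this term survives. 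For $n\ne x$ we get $\int (x/n)^{it}\log t\,dt\ll\log T/|\log(x/n)|$, and summing over $n$ with weight $n^{-a}$ gives $O(\log^3T)$, which is negligible. The second term is $-\sum_m\bigl((\chrone\Lambda)*\chrtwo\bigr)(m)(x/m)^s$. Its $m=x$ term is $O(\Delta)$, since $x$ is fixed, and the other terms are handled as before, giving $O(\log^3 T)$.

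\textbf{Left edge} ($\Re s=1-a$). Here $-\frac{L'}{L}(1-s,\ochrone)=\sum_m\ochrone(m)\Lambda(m)m^{s-1}$ converges absolutely. Apply the functional equation $L(s,\chrtwo)=\epsilon\,\cX(s)L(1-\overline{s},\ochrtwo)$ with $|\cX|\asymp (\qtwo t)^{1/2-\sigma}$, and then expand $L(1-s,\ochrtwo)$ as a Dirichlet series, which converges absolutely at $\Re(1-s)=a$. This produces $\sum_{k}c(k)\int \cX(s)(xk)^{s}\,ds$-type integrals with $c=(\ochrone\Lambda)*\ochrtwo$ (up to conjugation and Gauss-sum factors). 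By the stationary phase lemma of Gonek, $\frac{1}{2\pi}\int_{T-\Delta}^T\cX(1-a-it)\,y^{1-a-it}\,dt$ is essentially $y\,\er(-y/q)$-weighted mass for $y$ in the window $[\frac{\qtwo(T-\Delta)}{2\pi},\frac{\qtwo T}{2\pi}]$ (with $y=xk$ or $k/x$), plus an error of $O(T^{1/2-a}\cdots)$. After factoring out the Gauss sum, this term reduces to
\[
\sum_{T'-\Delta'<k\le T'}c(k)\,e(\pm k x/\qtwo)\quad\text{with } T'\asymp T .
\]
Since $x$ is an integer, $e(kx/\qtwo)$ is periodic in $k$ modulo $\qtwo$. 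Hence the left edge is a combination of short-interval sums of $(\chrone\Lambda)*\chrtwo$ twisted by characters, restricted to residue classes mod $\qtwo$.

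\textbf{Main obstacle.} The key step is showing that these short sums are $O(\Delta)$, i.e.\ that there is no main term of size $\Delta\log T$. Because $\chrone\ne\chrtwo$, the Dirichlet series $\frac{L'}{L}(s,\chrone\psi)L(s,\chrtwo\psi')$ has at $s=1$ at most a simple pole, arising when $\chrtwo\psi'$ is principal. It has no double pole, because $\chrone\psi$ and $\chrtwo\psi'$ cannot both be principal at once. A Perron formula with truncation height $H=\er^{c\sqrt{\log T}}$, combined with the de la Vallée Poussin zero-free region, gives for $y\asymp T$ and $h\ge y\er^{-C\sqrt{\log y}}$
\[
\sum_{y<k\le y+h}c(k)=\text{(residue term)}\cdot h+O\bigl(h\,\er^{-c'\sqrt{\log y}}\bigr).
\]
The residue term is $\ll h$ and produces an $O(\Delta)$ contribution. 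A possible Siegel zero is harmless: it affects only finitely many moduli $\le Q$, so it is absorbed into constants depending on $Q$. The choice $\Delta\ge T\er^{-C\sqrt{\log T}}$ is exactly what makes the zero-free-region error admissible, while $\Delta\le T/\log T$ keeps the linearization of the phases valid. Combining the edges then gives \eqref{eq:BLest}.
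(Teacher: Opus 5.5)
Your proposal is correct. Its architecture is the paper's: the same kernel $-\frac{L'}{L}(1-s,\ochrone)L(s,\chrtwo)x^s$ on the same rectangle, the functional equation for $L'/L$ on the right edge with the main term coming from $n=x$, and on the left edge the functional equation for $L(s,\chrtwo)$ followed by the Gonek/Conrey--Ghosh--Gonek stationary-phase lemma. That lemma reduces the left edge to sums of $(\chrone\Lambda)*\chrtwo$, twisted by $\exp(-2\pi i nx/\qtwo)$, over an interval of length $\asymp\Delta$ near $\qtwo T/(2\pi x)$.

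You genuinely differ in how you bound these short sums.
\begin{itemize}
\item The paper (Lemma~\ref{lem:crucial}) uses the hyperbola method, splitting into $S_a,S_b,S_{ab}$. It feeds in only the prime number theorem for progressions with de la Vall\'ee Poussin error, bounded character sums, and the convergence of $\sum\chi(n)/n$ and $\sum\chi(n)\Lambda(n)/n$ for nonprincipal $\chi$. This is more elementary, since the zero-free region enters only through black-box prime number theorems.
\item You apply a truncated Perron formula to $-\frac{L'}{L}(s,\ochrone\chi)L(s,\ochrtwo\chi)$ and shift into the zero-free region. This makes the source of the $\log T$ saving transparent: the twisted series has at most a simple pole at $s=1$, because a double pole would force both $\ochrone\chi$ and $\ochrtwo\chi$ to be principal, hence $\chrone=\chrtwo$. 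It even gives an asymptotic $R_\chi h$ rather than only an upper bound.
\item The price of your route is that you need bounds for $L'/L$ and $L$ inside the zero-free region up to height $\exp(c\sqrt{\log T})$. You also need explicit bookkeeping for a possible exceptional zero. Since $Q$ is fixed, that zero contributes only $O_Q(h)$, so $C$ stays absolute.
\end{itemize}
Your choice $a=1+1/\log T$ instead of $1+1/\log x$ is also fine. It improves the horizontal-edge and stationary-phase errors to $T^{1/2+o(1)}$ rather than $T^{0.95}$, at the cost of harmless $\log T$ factors in the off-diagonal right-edge sums.

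A few small points need tidying.
\begin{itemize}
\item The same character $\chi$ twists both factors, so your $\psi$ and $\psi'$ coincide. A simple pole also arises when $\ochrone\chi$ is principal, not only when the second character is.
\item Characters modulo $\qtwo$ do not detect $k$ with $\gcd(k,\qtwo)>1$. Such $k$ force $a=p^j$ with $p\mid\qtwo$, so they contribute trivially $O(\delta+\log T)$, but this should be said.
\item The constant in the functional equation is $-\log(\qone/2\pi)$, not $+\log(\qone/2\pi)$. This is harmless, since it only feeds an $O(\Delta)$ term.
\item You should record the $O(T^{1/2}(\log T)^2)$ cost of passing from the shifted window back to $(T-\Delta,T]$.
\end{itemize}
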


We comment on three features of Theorem~\ref{thm:BLmain}.
First, the asymptotic holds in short windows. The range
$T\,\er^{-C\sqrt{\log T}}\le\Delta\le T/\log T$ is essentially
the shortest one accessible from the classical zero-free region,
and the lower bound is small enough that $\Delta/T$ can be made
smaller than $(\log T)^{-A}$ for every fixed $A>0$. The upper
bound ensures that the main term, of size $\frac{1}{2\pi}\Delta\log T$,
is comparable to the total number of zeros in the window,
which is roughly $\frac{1}{2\pi}\Delta\log(\qone T)$ by the
Riemann--von Mangoldt formula. The earlier asymptotics
of~\cite{GKS, GK} are stated only over the longer range
$0<\gamma\le T$; we are not aware of any short-window
versions in the literature.

Second, the main term is proportional to the
character value $\chrtwo(x)$, rather than to a
sum or moment built from $\chrtwo$. Since distinct primitive
characters take distinct values, varying $x$ separates different
choices of $\chrtwo$; this feature leads to
Corollary~\ref{cor:BLmain} below. The factor $x^\rho$ in the summand
oscillates as $x^{1/2}\,\er^{i\gamma\log x}$ on the critical
line, and plays the role of an arithmetic twist analogous to the
one introduced by Fujii~\cite{Fujii_zetazeros} for
$\sum_\rho x^\rho\,\zeta'(\rho)$. The novelty here is pairing that 
twist with a different function, so the arithmetic
structure is governed by the Dirichlet convolution
$(\chrone\Lambda)*\chrtwo$ rather than by $\Lambda$ alone.

Third, the theorem is unconditional. We assume no form of GRH,
partial or otherwise, for either $L(s,\chrone)$ or $L(s,\chrtwo)$.
When $x$ is chosen so that $\chrtwo(x)\ne 0$, the error term saves a
factor of $\log T$ over the main term, reflecting the strength
of the de la Vall\'ee Poussin zero-free region. One caveat is that if
$L(s,\chrone)$ or $L(s,\chrtwo)$ admits a Siegel zero, then the
implied constant is ineffective, as is unavoidable in any argument
that invokes Siegel's theorem.

We now turn to the linear-combination consequence promised earlier.

\begin{corollary}\label{cor:BLmain}
Let $\chi_1,\ldots,\chi_k$ be pairwise distinct primitive
characters of moduli at most $Q$, none equal to $\chrone$,
and let $a_1,\ldots,a_k\in\C$ be coefficients that are not all zero. 
Let $\mathbf a$ denote the tuple $(a_1,\ldots,a_k)$. 
Then, with $x$ and $\Delta$ as in Theorem~\ref{thm:BLmain}, there exists
$T_0=T_0(Q,x,k,\mathbf a)>0$ such that for all $T\ge T_0$,
\[
\ssum{\rho=\beta+i\gamma\\T-\Delta<\gamma\le T}
x^\rho\sum_{j=1}^k a_j L(\rho,\chi_j)
=\frac{\psi(x)}{2\pi}\,\Delta\log T
+O(\Delta),
\qquad
\psi(x)\defeq\sum_{j=1}^k a_j\chi_j(x),
\]
where $\rho$ runs over the zeros of $L(s,\chrone)$, each
counted with its multiplicity, and the implied constant
depends only on $Q,x,k,{\mathbf a}$.
In particular, for a positive density of $x\in\N$ and all
$T\ge T_0$, some zero $\rho=\beta+i\gamma$ of $L(s,\chrone)$
with $T-\Delta<\gamma\le T$ satisfies
\[
a_1L(\rho,\chi_1)+\cdots+a_kL(\rho,\chi_k)\ne 0.
\]
\end{corollary}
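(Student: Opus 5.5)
The asymptotic follows from Theorem~\ref{thm:BLmain} by linearity. For each $j$, the character $\chi_j$ is primitive, distinct from $\chrone$, and has modulus at most $Q$. Applying Theorem~\ref{thm:BLmain} with $\chrtwo=\chi_j$ gives a threshold $T_0(Q,x)$ that is uniform over all such characters, and an implied constant depending only on $Q$ and $x$. Multiply the $j$-th asymptotic by $a_j$ and sum over $j$. The main terms combine to $\frac{\psi(x)}{2\pi}\Delta\log T$, and the error terms combine to $O\bigl(\sum_j|a_j|\,\Delta\bigr)$, whose constant depends only on $Q,x,k,\mathbf a$. The sum over zeros is finite for each $T$, so exchanging it with the finite sum over $j$ is trivial.

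For the nonvanishing statement, suppose $\psi(x)\ne0$. The main term then has size $\asymp\Delta\log T$, which dominates the $O(\Delta)$ error once $T\ge T_0$ is large enough in terms of $Q,x,k,\mathbf a$. So the left-hand side is nonzero. If every zero in the window satisfied $\sum_j a_jL(\rho,\chi_j)=0$, the sum would vanish, which is a contradiction. Everything therefore reduces to showing that $\psi(x)\ne0$ for a set of $x\in\N$ of positive density (with $x\ge10$).

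The main step is this density claim. Let $q=\operatorname{lcm}(q_1,\dots,q_k)$ and regard each $\chi_j$ as a function on $(\Z/q\Z)^\times$ via the induced character $\chi_j'$ mod $q$. For $\gcd(x,q)=1$ we have $\chi_j(x)=\chi_j'(x)$. The characters $\chi_j'$ are pairwise distinct, since distinct primitive characters induce distinct characters to a common modulus. By linear independence of characters on the group $(\Z/q\Z)^\times$ (orthogonality), the function $\sum_j a_j\chi_j'$ is not identically zero, because not all $a_j$ vanish. So there is a residue class $r$ mod $q$ with $\gcd(r,q)=1$ and $\psi(r)\ne0$. Every $x\equiv r\pmod q$ has $\psi(x)=\psi(r)\ne0$, since $\psi$ is $q$-periodic. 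This arithmetic progression has density $1/q>0$, and discarding $x<10$ does not change the density.

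The one point needing care is the order of quantifiers. The threshold $T_0$ depends on $x$, so "positive density of $x$ and all $T\ge T_0$" should be read as: for each such $x$ there is some $T_0(Q,x,k,\mathbf a)$ beyond which the conclusion holds. That is exactly what the argument delivers. If one wanted $T_0$ uniform, one would fix a single $x$ in the good progression, and a single $x$ already suffices to exhibit a zero in every high window where the combination does not vanish.
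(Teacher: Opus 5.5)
Your proof is correct and is essentially the paper's argument: you apply Theorem~\ref{thm:BLmain} to each $\chi_j$ with its threshold $T_0(Q,x)$, which is uniform over characters of modulus at most $Q$, and sum linearly; you then use orthogonality of the distinct induced characters modulo the lcm of the moduli to find a reduced residue class on which $\psi\ne 0$, and conclude because the main term dominates the $O(\Delta)$ error. Your explicit restriction to $\gcd(r,q)=1$ is the cleaner way to state that step: the paper's identity $\sum_{x\bmod M}|\psi(x)|^2=\phi(M)\sum_j|a_j|^2$ is exact only when $x$ runs over reduced residues. Over all residues it becomes the inequality $\ge$, which still suffices for positive density.
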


\begin{proof}
Applying Theorem~\ref{thm:BLmain} with $\chrtwo=\chi_j$ for each
$j=1,\dots,k$, multiplying by $a_j$, and summing over $j$ gives
the stated asymptotic. For the final assertion, it remains to find
a positive density of $x\in\N$ at which the main term survives,
that is, at which $\psi(x)$ is
nonzero. Let $M$ be the least common multiple of the moduli of
$\chi_1,\dots,\chi_k$. By orthogonality,
\[
\sum_{x\bmod M}|\psi(x)|^2=\phi(M)\sum_{j=1}^k|a_j|^2>0,
\]
so $\psi(x)\ne 0$ on at least one reduced residue class modulo $M$,
a set of positive density. For any such $x$, the main term dominates
the error term once $T$ is large enough, so the sum over
$\rho$ is nonzero. Thus, at least one zero $\rho$ satisfies
$a_1L(\rho,\chi_1)+\cdots+a_kL(\rho,\chi_k)\ne 0$.
\end{proof}

\smallskip
Corollary~\ref{cor:BLmain} invites comparison with joint
universality, which for Dirichlet $L$-functions (and, in the form
given by Sander and Steuding~\cite{SanderSteuding}, for their sums
and products) implies that no nontrivial linear combination can
vanish identically on any open subset of the strip
$\tfrac12<\Re(s)<1$. However, the comparison is inexact. Universality
constrains values on open sets, whereas
Corollary~\ref{cor:BLmain} constrains them on the discrete and
arithmetically distinguished set of zeros of a single fixed
$L$-function, a set about which universality says nothing. In this
sense the corollary belongs to the circle of ideas around the
Linnik--Sprind\v zuk theorem rather than to universality.

\smallskip
\subsection{Method and outline}

The proof of Theorem~\ref{thm:BLmain} proceeds by contour
integration. We integrate $x^s$ against the function
\[
K(s)\defeq -\frac{L'}{L}(1-s,\ochrone)\,L(s,\chrtwo)
\]
around a rectangle of height $\Delta$ at vertical
position $T$. The poles of $K$ inside the contour are precisely the
zeros of $L(s,\chrone)$ at which $L(s,\chrtwo)$ does not
vanish. Indeed, at each zero $\rho$ of $L(s,\chrone)$, of multiplicity
$m_\rho$, the factor $-(L'/L)(1-s,\ochrone)$ has a simple pole with
residue $m_\rho$, and so $x^sK(s)$ has residue
$m_\rho\,x^\rho L(\rho,\chrtwo)$ there, which vanishes when
$L(\rho,\chrtwo)=0$.  Summing the residues by Cauchy's theorem
therefore reproduces the left-hand side of \eqref{eq:BLest}, with
each zero counted according to its multiplicity.
On the right edge of the rectangle, the functional equation
\eqref{eq:L7_hist} for the logarithmic derivative exchanges the factor
$\frac{L'}{L}(1-s,\ochrone)$ for $-\frac{L'}{L}(s,\chrone)$ plus an
explicit term of size $\log T$. Paired with the term $n=x$ of the
Dirichlet series of $L(s,\chrtwo)$, the explicit term produces the
main term $\frac{1}{2\pi}\chrtwo(x)\Delta\log T$, while the Dirichlet
series expansion of $\frac{L'}{L}(s,\chrone)\,L(s,\chrtwo)$
contributes only to the error.
On the left edge,
the asymmetric functional equation \eqref{eq:L3_hist} converts
the integral into one whose key input is a bound on the
short-interval sums of the arithmetical function
\[
k(n)\defeq\big\{(\chrone\Lambda)*\chrtwo\big\}(n)
=\ssum{a,b\in\N\\ab=n}\chrone(a)\Lambda(a)\chrtwo(b).
\]
The short-interval bound, established as Lemma~\ref{lem:crucial},
is where the de la Vall\'ee Poussin zero-free region for
$\zeta(s)$ and Dirichlet $L$-functions enters the argument, and
is the source of the gain encoded in the
lower bound $\Delta\ge T\,\er^{-C\sqrt{\log T}}$.

The rest of the paper is organized as follows.
Section~\ref{sec:prelim} assembles the tools of the proof. After
fixing notation in \S\ref{sec:notation}, we recall standard
estimates for Dirichlet $L$-functions in \S\ref{sec:estimates},
establish bounds on several sums in \S\ref{sec:bounds}, chief among
them the short-interval bounds of Lemma~\ref{lem:crucial}, and
record an integral estimate for the factor $\cM_\chi$ of the
functional equation in \S\ref{sec:M}. Section~\ref{sec:BLmain}
proves the main result, Theorem~\ref{thm:BLmain}.

{\Large\section{Preliminaries}\label{sec:prelim}}

\subsection{Notation}\label{sec:notation}

We use the letter $s$ to denote a complex variable, with
$\sigma \defeq \Re s$ and $t \defeq \Im s$.
Throughout the paper we restrict
to $t\ge 10$. Under this standing convention $\log t>0$, and the
estimates below take a cleaner form than in the sources cited while
remaining adequate for our application. This lower bound on $t$
entails no loss of generality. By the reflection principle
$\overline{L(\overline s,\chi)}=L(s,\overline\chi)$,
every estimate below holds equally for $t\le-10$ with $t$ replaced by
$|t|$ and $\chi$ by $\overline\chi$, and the range $|t|<10$ plays no
role in any of our arguments. We therefore state all results for
$t\ge 10$ without further comment, omitting the qualifier from
individual displays unless a different threshold is in force.

Implied constants in the symbols $\ll$ and $O$ are absolute unless
dependence on other parameters is indicated by a subscript, as in
$\ll_Q$ or $O_{Q,x}$. We write $f\asymp g$ to mean $f\ll g\ll f$,
and $\phi$ denotes the Euler totient function.

\subsection{Estimates with $L$-functions}\label{sec:estimates}

Let $\chi$ be a primitive character of modulus $q\ge 1$.
The representation
\be\label{eq:L1_hist}
\frac{L'}{L}(s,\chi)
=\ssum{\rho=\beta+i\gamma\\|\gamma-t|<1}\frac{1}{s-\rho}+O(\log qt)
\qquad (\sigma\in[-1,2])
\ee
is a form of the classical explicit formula. The result originates with
Hadamard~\cite{Hadamard1896}
and de la Vallée Poussin~\cite{VallPoussin1896}
in their proofs of the Prime Number Theorem.
For a modern treatment, we refer the reader to
\cite[Lems.\,12.1, 12.6]{MontVau};
see also \cite[Thms.\,10.13, 10.17]{MontVau}.

The right-hand side of \eqref{eq:L1_hist} becomes unbounded as $s$
approaches a zero of $L(s,\chi)$. Nevertheless, for every $t\ge 10$
there is an ordinate $t^\star\in[t,t+1]$ at which the uniform bound
\be\label{eq:L2_hist}
\frac{L'}{L}(\sigma\pm it^\star,\chi)
\ll (\log qt)^2\qquad (\sigma\in[-1,2])
\ee
holds. This follows from classical zero-density and mean-value
arguments. The idea goes back to Littlewood~\cite{Littlewood1924}
for $\zeta(s)$,
and to classical zero-density methods developed later by
Ingham~\cite{Ingham1932}
and Selberg~\cite{Selberg1946}. For a modern treatment, see
\cite[Lems.\,12.2, 12.7]{MontVau}.

Dirichlet~\cite{Dirichlet1837} introduced the $L$-functions; their
analytic continuation and functional equation were established in
increasing generality by Riemann~\cite{Riemann1859} and
Hurwitz~\cite{Hurwitz1882}.
The asymmetric version of the functional equation
is (cf.~\cite[Cor.\,10.9]{MontVau})
\be\label{eq:L3_hist}
L(s,\chi)=\cM_\chi(s)L(1-s,\overline\chi),
\ee
where $\cM_\chi$ is given by
\[
\cM_\chi(s)\defeq\epsilon\,2^s\pi^{s-1}q^{1/2-s}
\Gamma(1-s)\sin\tfrac{\pi}{2}(s+\kappa)
\]
with
\be\label{eq:constdefns}
\kappa\defeq\begin{cases}
0&\quad\hbox{if $\chi(-1)=+1$,}\\
1&\quad\hbox{if $\chi(-1)=-1$,}\\
\end{cases}
\qquad{\mathfrak g}(\chi)\defeq
\sum_{a\bmod q}\chi(a)\e(a/q),
\qquad\epsilon\defeq\frac{{\mathfrak g}(\chi)}{i^\kappa\sqrt{q}}.
\ee
Here, $\e(u)\defeq\er^{2\pi i u}$ for all $u\in\R$.
The estimate
\be\label{eq:L5_hist}
|L(s,\chi)|
\asymp
(qt)^{\frac12-\sigma}|L(1-s,\overline{\chi})|
\ee
follows directly from \eqref{eq:L3_hist};
see \cite[Cors.\,10.5 and 10.10]{MontVau}.
Combining \eqref{eq:L5_hist} and the standard bound
(cf.~\cite[Cor.\,1.17 and Lem.\,10.15]{MontVau})
\[
L(s,\chi)\ll
(1+qt)^{1-\sigma}\log qt\qquad(\sigma\ge\tfrac12),
\]
one infers the classical convexity bound
\be\label{eq:L6_hist}
L(s,\chi)
\ll
\log qt
\begin{cases}
1&\quad\hbox{if $\sigma\ge 1$},\\
(qt)^{(1-\sigma)/2}
&\quad\hbox{if $0\le\sigma\le 1$},\\
(qt)^{1/2-\sigma}
&\quad\hbox{if $\sigma\le 0$};
\end{cases}
\ee
see, e.g., \cite[Thm.\,5.53]{IwanKowal}.
The convexity principle originates in the work of
Hadamard~\cite{Hadamard1896}
and Phragmén--Lindelöf~\cite{PhragmenLindelof1908}.
An early application to $\zeta(s)$ was given by
Hardy and Littlewood~\cite{HardyLittlewood1918}.

Finally, \eqref{eq:L3_hist} implies a functional equation
for the logarithmic derivative
$\frac{L'}{L}(s,\chi)$ as in~\cite[Eq.\,(10.35)]{MontVau}, namely
\be\label{eq:L7_hist}
\frac{L'}{L}(s,\chi)
=-\frac{L'}{L}(1-s,\overline\chi)-\log\frac{q }{2\pi}
-\frac{\Gamma'}{\Gamma}(1-s)
+\tfrac{1}{2}\pi\cot\tfrac{1}{2}\pi(s+\kappa).
\ee
This identity is what allows us, on the right edge of the contour,
to trade the factor $\frac{L'}{L}(1-s,\ochrone)$
appearing in $K(s)$ for $-\frac{L'}{L}(s,\chrone)$ plus
explicit lower-order terms. These explicit terms are responsible
for the main term in Theorem~\ref{thm:BLmain}.

\subsection{Bounds on certain sums}\label{sec:bounds}

\begin{lemma}\label{lem:Gkappa}
Let $x\ge 10$ and $\nu\ge 0$ be integers.
Taking $c\defeq 1+\tfrac{1}{\log x}$, we have
\[
\ssum{n\ge 2\\n\ne x}
\frac{(\log n)^\nu}{n^c|\log(x/n)|}\llsym{\nu}(\log x)^{\nu+1}.
\]
\end{lemma}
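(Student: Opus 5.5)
We split the range of $n$ according to its position relative to $x$. The key point is that with $c=1+1/\log x$ we have $n^{-c}\le n^{-1}$. Also $\sum_{n}(\log n)^\nu n^{-c}\ll_\nu (c-1)^{-\nu-1}=(\log x)^{\nu+1}$. So wherever $|\log(x/n)|\gg 1$, the sum is harmless, and all the difficulty sits near $n=x$.

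\textbf{Far range.} First take $n\le x/2$ or $n\ge 2x$. There $|\log(x/n)|\ge\log 2$, so this part is bounded by
\[
\frac{1}{\log 2}\sum_{n\ge 2}\frac{(\log n)^\nu}{n^{c}}\ll_\nu\int_1^\infty\frac{(\log u)^\nu}{u^{c}}\,du=\frac{\nu!}{(c-1)^{\nu+1}}=\nu!\,(\log x)^{\nu+1}.
\]
Here the integral comparison is justified because the summand is eventually decreasing, and the initial terms contribute $O_\nu(1)$.

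\textbf{Near range.} Now take $x/2<n<2x$ with $n\ne x$, and write $n=x+h$ with $1\le|h|<x$. In this range $(\log n)^\nu\ll_\nu(\log x)^\nu$ and $n^{-c}\le n^{-1}\ll x^{-1}$. By the mean value theorem,
\[
|\log(x/n)|=|\log(1+h/x)|\gg |h|/x,
\]
since $|h/x|\le 1$ and $h/x\ge-1/2$. Hence this part is
\[
\ll_\nu(\log x)^\nu\sum_{1\le|h|\le x}\frac{1}{x}\cdot\frac{x}{|h|}\ll(\log x)^{\nu}\log x=(\log x)^{\nu+1}.
\]

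Adding the two ranges gives the claim. The only step needing any care is the near range. There, the integrality of $x$ and $n$ is what forces $|h|\ge1$; this lower bound on $|h|$ is what produces the harmonic sum, and it is the source of the single extra factor $\log x$.
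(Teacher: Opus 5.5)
Your proof is correct and follows the paper's argument essentially step for step. The paper likewise splits into a far range where $|\log(x/n)|\gg 1$, bounded via $\sum_{n\ge 2}(\log n)^\nu n^{-c}\le(-1)^\nu\zeta^{(\nu)}(c)\ll(c-1)^{-\nu-1}$ (your computation $\int_1^\infty(\log u)^\nu u^{-c}\,du=\nu!\,(c-1)^{-\nu-1}$ just makes that bound explicit), and a near range around $x$ where $|\log(x/n)|\gg|x-n|/x$ and $n^c\asymp x$ reduce everything to a harmonic sum; the only differences are cosmetic (your cut at $2x$ rather than $\tfrac32x$, and the mean value theorem in place of $\log(1+u)\ge u/(1+u)$).
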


\begin{proof}
The denominator $|\log(x/n)|$ is bounded away from zero except
when $n$ is close to~$x$, so the sum is controlled
by the integers near $x$; away from $x$ the series
converges like $\zeta(c)$ and its derivatives. We make this precise
by splitting the sum according to the size of $n$ relative to $x$.
When $n<\frac12x$ or $n>\frac32x$, the lower bound
$|\log(x/n)|\gg 1$ holds, hence the contribution
from all $n$ in these ranges is
\[
\ll \sum_{n=2}^{\infty}\frac{(\log n)^\nu}{n^{c}}
=(-1)^\nu\zeta^{(\nu)}(c)\ll \frac{1}{(c-1)^{\nu+1}}
=(\log x)^{\nu+1},
\]
where we used $c-1=1/\log x$ together with the standard bound
$\zeta^{(\nu)}(c)\ll(c-1)^{-(\nu+1)}$ as $c\to 1^+$.
Next, consider integers $n$ in the range $\frac12x\le n<x$.
Here, $n\asymp x$ so we have $n^c\asymp x^c=\er{} x\asymp x$.
For the logarithm, the elementary inequality
$\log(1+u)\ge u/(1+u)$ applied with $u=(x-n)/n$ gives
\[
\log(x/n)\ge\frac{x-n}{x}.
\]
Hence, the contribution from this interval is
\[
\ll\ssum{\frac12x\le n<x}\frac{(\log n)^\nu}{x-n}
\le(\log x)^\nu\ssum{m\le\frac12x}\frac{1}{m}\ll(\log x)^{\nu+1}.
\]
Similarly, for integers $n$ with $x<n\le\frac32x$, we have
\[
|\log(x/n)|=\log(n/x)\ge\frac{n-x}{2x}
\mand
n^c\asymp x,
\]
so the corresponding contribution is
\[
\ll\ssum{x<n\le\frac32x}\frac{(\log n)^\nu}{n-x}
\le(\log \tfrac32x)^\nu\ssum{m\le\frac12x}\frac{1}{m}
\llsym{\nu}(\log x)^{\nu+1}.
\]
The result follows.
\end{proof}

\bigskip

Next, recall that there is an absolute constant $c>0$ such that
\be\label{eq:delaVP1}
\psi(x)\defeq\sum_{n\le x}\Lambda(n)
=x+O\big(x\,\er^{-c\sqrt{\log x}}\big).
\ee
More generally, for any character $\chi$ of modulus
$q\le \er^{c\sqrt{\log x}}$, the twisted Chebyshev function
satisfies
\be\label{eq:delaVP2}
\sum_{n\le x}\chi(n)\Lambda(n)
=\ind{\chi=\chi_{0,q}}\cdot x+O\big(x\,\er^{-c\sqrt{\log x}}\big),
\ee
where $\chi_{0,q}$ denotes the principal character. Furthermore,
if $\chi$ is nonprincipal, then
\be\label{eq:delaVP3}
\sum_{n\le x}\frac{\chi(n)}{n}\llsym{q} 1
\mand
\sum_{n\le x}\frac{\chi(n)\Lambda(n)}{n}\llsym{q} 1.
\ee
These estimates are classical consequences of the de la Vall\'ee Poussin
zero-free region for $\zeta(s)$ and Dirichlet $L$-functions, combined with
the explicit formula and partial summation; we refer the reader to the
excellent treatments in Davenport~\cite{Davenport},
Montgomery and Vaughan~\cite{MontVau},
Iwaniec and Kowalski~\cite{IwanKowal}, and Koukoulopoulos~\cite{Koukou}.
We caution the reader that the implied constants in~\eqref{eq:delaVP2}
and~\eqref{eq:delaVP3} are ineffective whenever $L(s,\chi)$
possesses a Siegel zero, owing to the standard ineffectivity in
Siegel's theorem.

\begin{remark}\label{chi0fails}
If $\chi$ is principal, then both sums in \eqref{eq:delaVP3} are
$O_q(\log x)$; we use this in Remark~\ref{chi0fails2} below.
\end{remark}

The next lemma plays a crucial role in our proof of
Theorem~\ref{thm:BLmain} in \S\ref{sec:BLmain}.
It asserts that, under mild hypotheses on the pair
$(\chrone,\chrtwo)$, the sums over a short interval $(t-\delta,t]$
of the arithmetical function $k=(\chrone\Lambda)*\chrtwo$ and of
certain twists of $k$ are $O_Q(\delta)$, a logarithm smaller than
the trivial bound $O_Q(\delta\log t)$.
This estimate is the key input on the left edge of the contour,
where it reduces the integral to an admissible error.

\begin{lemma}\label{lem:crucial}
Fix $Q\ge 1$, and let $\chrone,\chrtwo$ be Dirichlet characters modulo
$\qone$ and $\qtwo$, respectively, with $\qone,\qtwo\le Q$.
Define the arithmetical function $k=k(\chrone,\chrtwo)$ by
\be\label{eq:kdef}
k(n)\defeq\big\{(\chrone\cdot\Lambda)*\chrtwo\big\}(n)
=\ssum{a,b\in\N\\ab=n}\chrone(a)\Lambda(a)\chrtwo(b)\qquad(n\in\N).
\ee
If $\chrone,\chrtwo$ are not both principal, then there exist an
absolute constant $\widetilde C>0$ and a threshold $t_0=t_0(Q)$ such
that
\begin{equation}\label{eq:kk1}
\sum_{t-\delta<n\le t}k(n)\llsym{Q}\delta
\end{equation}
for all $t\ge t_0$ and all $\delta$ satisfying
$t\,\er^{-\widetilde C\sqrt{\log t}}\le\delta\le t$.
Further, if $\chrone,\chrtwo$ are not both induced by the same
primitive character, then for every Dirichlet character $\chi$ modulo
$q\le Q$ and every $h\in\Z$, one has
\begin{align}
\label{eq:kk2}
\sum_{t-\delta<n\le t}k(n)\chi(n)&\llsym{Q}\delta,
\\
\label{eq:kk3}
\sum_{t-\delta<n\le t}k(n)\e(-nh/q)&\llsym{Q}\delta,
\end{align}
in the same range of $t$ and $\delta$.
\end{lemma}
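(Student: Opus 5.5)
The plan is to prove \eqref{eq:kk1} directly by the Dirichlet hyperbola method, and then to reduce \eqref{eq:kk2} and \eqref{eq:kk3} to \eqref{eq:kk1} or to the same argument. Throughout I write $y\defeq\sqrt t$ and split the pairs $(a,b)$ with $ab=n\in(t-\delta,t]$ into two classes: (I) $b\le y$; (II) $b>y$, which forces $a<y$. Each class is handled by summing over the "long" variable inside a short interval. I take $\widetilde C$ smaller than the constant $c$ of \eqref{eq:delaVP2} by a suitable absolute factor. I also take $t_0(Q)$ large enough that $Q^2\le\er^{c\sqrt{\log y}}$, so that \eqref{eq:delaVP2} applies to every modulus that arises.

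\emph{Class (I).} For fixed $b\le y$ the inner sum is $\sum_{(t-\delta)/b<a\le t/b}\chrone(a)\Lambda(a)$. By \eqref{eq:delaVP2} this equals $\ind{\chrone\text{ principal}}\,\delta/b+O\big((t/b)\er^{-c\sqrt{\log(t/b)}}\big)$, and $\log(t/b)\ge\frac12\log t$. The error terms summed over $b\le y$ give $\ll t\log t\,\er^{-c\sqrt{\log t/2}}\le\delta$, provided $\widetilde C<c/\sqrt2$ and $t$ is large. If $\chrone$ is principal, then $\chrtwo$ is not, and the main terms contribute $\delta\sum_{b\le y}\chrtwo(b)/b\ll_Q\delta$ by \eqref{eq:delaVP3}.

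\emph{Class (II).} For fixed $a<y$ the variable $b$ runs over $\big((t-\delta)/a,\,t/a\big]\cap(y,\infty)$, which is an interval. If $\chrtwo$ is nonprincipal, the inner sum is $O(Q)$ by periodicity, and the total is $\ll Q\,\psi(y)\ll_Q\sqrt t\le\delta$. If $\chrtwo$ is principal, the inner sum is $\frac{\phi(\qtwo)}{\qtwo}\cdot(\text{length})+O(Q)$. The length equals $\delta/a$ except when $a$ lies in the short range $\big((t-\delta)/y,\,y\big)$; there the length is still $\le\delta/a$, and that range of $a$ contributes $\ll\delta\cdot(\delta/t)\log t+\sqrt t\ll\delta$ because $\delta\le t/\log t$ is available where needed (otherwise one uses a crude bound). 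What remains is $\delta\,\frac{\phi(\qtwo)}{\qtwo}\sum_{a<y}\chrone(a)\Lambda(a)/a\ll_Q\delta$, again by \eqref{eq:delaVP3}, since $\chrone$ is now nonprincipal. This gives \eqref{eq:kk1}.

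\emph{The twists.} For \eqref{eq:kk2}, complete multiplicativity gives $k(n)\chi(n)=\{(\chrone\chi\cdot\Lambda)*(\chrtwo\chi)\}(n)$. The characters $\chrone\chi$ and $\chrtwo\chi$ have moduli at most $Q^2$, and they cannot both be principal: otherwise $\chrone$ and $\chrtwo$ would both be induced by the primitive character inducing $\overline\chi$. So \eqref{eq:kk1} with $Q^2$ in place of $Q$ applies. For \eqref{eq:kk3}, I rerun the hyperbola argument with the periodic weight $f(n)=\e(-nh/q)$. The prime powers $a$ and the finitely many primes dividing $qQ!$ contribute $O_Q(\sqrt t+\delta)$. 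On primes $a$ coprime to $q\qone$ (respectively for fixed $a$ in class (II)), the map $a\mapsto\chrone(a)f(ab)$ (respectively $b\mapsto\chrtwo(b)f(ab)$) is periodic and is a bounded linear combination of characters. Hence \eqref{eq:delaVP2} and the periodic-sum bound produce main terms of the form $\delta\sum_{b\le y}\chrtwo(b)g(b)/b$, with $g(b)$ the mean of $\chrone(a)f(ab)$ over reduced $a$, and symmetrically in class (II).

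The main obstacle is showing that these weighted harmonic sums are $O_Q(1)$. This amounts to showing that the mean of $\chrtwo g$ over a full period vanishes, which is where the hypothesis that $\chrone$ and $\chrtwo$ are not induced by the same primitive character must be used. I would first try to expand $g$ into Gauss sums: $g(b)$ is nonzero only when the character $\chrone$, restricted appropriately, matches the additive twist, and in that case $g(b)$ is proportional to $\overline{\chrone}$ (induced) evaluated at $bh$. Then orthogonality between $\chrtwo$ and $\overline{\chrone}$-type characters gives the vanishing of the mean. If the bookkeeping with non-coprime $b$ becomes unwieldy, the fallback is to decompose $\e(-nh/q)$ by $d=\gcd(n,q)$ into Dirichlet characters of $n/d$. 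Each resulting piece would then be handled by the \eqref{eq:kk2} argument applied to the convolution restricted to $d\mid ab$.
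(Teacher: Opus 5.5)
Your proofs of \eqref{eq:kk1} and \eqref{eq:kk2} are essentially the paper's. Class (I) is its $S_b$, class (II) is $S_a-S_{ab}$ organized as a partition rather than by inclusion--exclusion, and the twist reduction for \eqref{eq:kk2} is the same.

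One soft spot is in class (II) with $\chrtwo$ principal, where you invoke $\delta\le t/\log t$. The lemma does not grant this, since $\delta$ may equal $t$. In that case the range $((t-\delta)/y,y)$ contains every $a<y$, so $\sum_{a<y}\Lambda(a)\delta/a\asymp\delta\log t$, and no bound via ``length $\le\delta/a$'' can work. The fix that works in all ranges is to keep the full main term $\delta\sum_{a<y}\chrone(a)\Lambda(a)/a$ and bound only the correction. Its weight $y-(t-\delta)/a$ lies in $[0,y]$, so the correction is $\le y\{\psi(y)-\psi((t-\delta)/y)\}\ll\delta$ by \eqref{eq:delaVP1}. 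This is exactly the paper's bound for $S_{ab}$.

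The genuine gap is in \eqref{eq:kk3}. Rerunning the hyperbola argument with the weight $\e(-nh/q)$ correctly reduces everything to the vanishing of one average:
\[
\mu\defeq\frac{1}{N\phi(N)}\sum_{r\in(\Z/N\Z)^\times}\sum_{b\bmod N}\chrone(r)\chrtwo(b)\,\e(-rbh/q),\qquad N\defeq\mathrm{lcm}(\qone,\qtwo,q).
\]
This $\mu$ is both the mean of $\chrtwo g$ in class (I) and the mean of $\chrone G$ in class (II). Both main terms equal $\mu\,\delta\log y+O_Q(\delta)$, so they add rather than cancel, and $\mu=0$ is genuinely needed.

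You leave exactly this step, the only place the hypothesis enters, as a tentative plan. The plan also rests on an inaccurate description: $g(b)$ is not in general a multiple of $\overline{\chrone}(bh)$. For $\chrone$ principal it is a Ramanujan sum depending on $\gcd(bh,q)$, and for imprimitive $\chrone$ or $\gcd(b,q)>1$ the Gauss sums degenerate.

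The missing idea is a one-line change of variables. Writing $b=\bar r c$ with $r\bar r\equiv1\pmod N$ factors $\mu$ as $\big(\frac{1}{\phi(N)}\sum_r\chrone\overline{\chrtwo}(r)\big)\big(\frac1N\sum_{c}\chrtwo(c)\e(-ch/q)\big)$. The first factor vanishes because $\chrone\overline{\chrtwo}$ is nonprincipal modulo $N$ precisely when $\chrone,\chrtwo$ are not induced by the same primitive character. With that, your route is complete and treats every $q\le Q$ uniformly, including $n$ not coprime to $q$.

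The paper never reopens the hyperbola method. It discards $n$ with $\gcd(n,\qtwo)>1$, which is negligible when $q=\qtwo$ because then $p\mid\qtwo$ forces $p$ to divide the prime power $a$. It then expands $\e(-mh/\qtwo)$ over reduced classes in characters modulo $\qtwo$, with Gauss-sum coefficients $O_Q(1)$, and applies \eqref{eq:kk2}. That is your fallback.

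Note that the paper's argument is written only for $q=\qtwo$, the case used later. For general $q$ the terms with $\gcd(n,q)>1$ are no longer negligible in absolute value. So either your primary route, completed as above, or your $d=\gcd(n,q)$ decomposition is required.
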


We first remark on the hypothesis governing \eqref{eq:kk2} and
\eqref{eq:kk3}. Since equal characters are induced by the same
primitive character, the hypothesis implies $\chrone\ne\chrtwo$;
when $\chrone$ and $\chrtwo$ are themselves primitive, it is
equivalent to the condition $\chrone\ne\chrtwo$. The hypothesis
cannot be dropped, for if $\chrone=\chrtwo$, then the bounds
\eqref{eq:kk2} and \eqref{eq:kk3} weaken to $O_Q(\delta\log t)$;
see Remark~\ref{chi0fails2} below.

The condition $\chrone\ne\chrtwo$ is not needed for \eqref{eq:kk1}.
Indeed, if $\chrone=\chrtwo$, then $\chrone$ is nonprincipal by
hypothesis, and $k(n)=\chrone(n)\log n$, so partial summation yields
the stronger estimate
\[
\sum_{t-\delta<n\le t}k(n)\llsym{\qone}\log t\ll\delta.
\]
By contrast, the hypothesis that $\chrone,\chrtwo$ are not both
principal cannot be dropped. If both are principal, then $k(n)$
agrees with $\log n$ up to the coprimality conditions $(a,\qone)=1$
and $(b,\qtwo)=1$ in the convolution, and the sum in \eqref{eq:kk1}
has genuine order $\delta\log t$; the coprimality conditions reduce
the implied constant by factors of $\phi(\qone)/\qone$ and
$\phi(\qtwo)/\qtwo$ but do not reduce the order.
Remark~\ref{chi0fails2} below, placed within the proof, records
the precise point at which this hypothesis enters the argument.

\begin{proof}
We begin with \eqref{eq:kk1}. Fix an absolute constant
$\widetilde C$ satisfying $0<\widetilde C<c/\sqrt{2}$, where $c$ is
the constant in \eqref{eq:delaVP1}. Since
\[
t(\log t)\,\er^{-c\sqrt{\frac12\log t}}\le t\,\er^{-\widetilde C\sqrt{\log t}}
\]
for all sufficiently large $t$, it suffices, after enlarging $t_0$
if necessary, to establish \eqref{eq:kk1} for all $\delta$ in the
wider range
\be\label{eq:range}
t(\log t)\,\er^{-c\sqrt{\frac12\log t}}\le\delta\le t.
\ee

Using Dirichlet's hyperbola method, we write
\[
\sum_{t-\delta<n\le t} k(n)
=\ssum{a,b\in\N\\ t-\delta<ab\le t}\chrone(a)\Lambda(a)\chrtwo(b)
=S_a+S_b-S_{ab},
\]
where
\dalign{
S_a&\defeq\sum_{a\le\sqrt{t}}\chrone(a)\Lambda(a)
\sum_{(t-\delta)/a<b\le t/a}\chrtwo(b),\\
S_b&\defeq\sum_{b\le\sqrt{t}}\chrtwo(b)
\sum_{(t-\delta)/b<a\le t/b}\chrone(a)\Lambda(a),\\
S_{ab}&\defeq\ssum{a,b\le\sqrt{t}\\t-\delta<ab\le t}
\chrone(a)\Lambda(a)\chrtwo(b).
}
Every pair $(a,b)$ occurring in $S_{ab}$ satisfies
$(t-\delta)/\sqrt{t}<a\le\sqrt{t}$, and therefore
\[
S_{ab}\ll\sum_{b\le\sqrt{t}}\,\sum_{(t-\delta)/\sqrt{t}<a\le\sqrt{t}}\Lambda(a)
\le\sqrt{t}\,\Big\{\psi\big(\sqrt{t}\,\big)
-\psi\Big(\frac{t-\delta}{\sqrt{t}}\Big)\Big\}
=\delta+O\big(t\,\er^{-c\sqrt{\frac12\log t}}\big)\ll\delta,
\]
where \eqref{eq:delaVP1} yields the penultimate step and
\eqref{eq:range} the last step.

Next, we estimate $S_a$. The argument splits according to whether or
not $\chrtwo$ is principal.

Suppose first that $\chrtwo=\chi_{0,\qtwo}$, and so $\chrone$ is
nonprincipal by hypothesis. Counting the integers in $(y,z]$ that
are coprime to $\qtwo$, we see that
\[
\sum_{y<b\le z}\chi_{0,\qtwo}(b)
=\frac{\phi(\qtwo)}{\qtwo}(z-y)+O_Q(1)\qquad(0<y<z).
\]
Taking $y=(t-\delta)/a$ and $z=t/a$, we get
\dalign{
S_a&=\sum_{a\le\sqrt{t}}\chrone(a)\Lambda(a)
\Big\{\frac{\phi(\qtwo)}{\qtwo}\frac{\delta}{a}+O_Q(1)\Big\}\\
&=\frac{\phi(\qtwo)}{\qtwo}\,\delta\sum_{a\le\sqrt t}
\frac{\chrone(a)\Lambda(a)}{a}+O_Q\Big(\sum_{a\le\sqrt t}\Lambda(a)\Big)
\llsym{Q}\delta+\sqrt{t}\ll\delta,
}
where we used \eqref{eq:delaVP3} with the nonprincipal character
$\chi\defeq\chrone$ together with the bound
$\sum_{a\le\sqrt t}\Lambda(a)\ll\sqrt t\ll\delta$.

Suppose now that $\chrtwo$ is nonprincipal, with $\chrone$ arbitrary.
The sum of $\chrtwo$ over any interval of consecutive integers is
bounded by $\qtwo$, so the inner sum over $b$ is $O_Q(1)$. Since
$|\chrone(a)|\le 1$, it follows that
\[
S_a\llsym{Q}\sum_{a\le\sqrt{t}}\Lambda(a)\ll\sqrt{t}\ll\delta.
\]
Thus $S_a=O_Q(\delta)$ in both cases.

\begin{remark}\label{chi0fails2}
Here, we record where the hypothesis that $\chrone,\chrtwo$ are not
both principal enters the proof. If $\chrone$ and $\chrtwo$ were
both principal, the argument above would apply \eqref{eq:delaVP3}
with a principal character. By Remark~\ref{chi0fails}, the sum over
$a$ in the second line of the display would then be of size
$O_Q(\log t)$ rather than $O_Q(1)$, and the bound on $S_a$ would
weaken to $O_Q(\delta\log t)$. The same applies to $S_b$ below.
\end{remark}

Finally, we estimate $S_b$. As before, the argument splits
according to whether or not $\chrone$ is principal.
In both cases, we evaluate the inner sum
by applying \eqref{eq:delaVP2} with $x\defeq t/b$, noting that
$\log(t/b)\ge\tfrac12\log t$ whenever $b\le\sqrt{t}$.

Suppose first that $\chrone=\chi_{0,\qone}$. Then $\chrtwo$ is
nonprincipal by hypothesis, and
\dalign{
S_b&=\sum_{b\le\sqrt{t}}\chrtwo(b)
\sum_{(t-\delta)/b<a\le t/b}\chi_{0,\qone}(a)\Lambda(a)\\
&=\sum_{b\le\sqrt{t}}\chrtwo(b)\Big\{
\frac{\delta}{b}+O\Big(\frac{t}{b}\,\er^{-c\sqrt{\frac12\log t}}\Big)\Big\}
\llsym{Q}\delta+t(\log t)\,\er^{-c\sqrt{\frac12\log t}}\ll\delta,
}
where we used the first bound in \eqref{eq:delaVP3} with the
nonprincipal character $\chi\defeq\chrtwo$, and \eqref{eq:range} in
the last step.

If $\chrone$ is nonprincipal, then the main term in
\eqref{eq:delaVP2} vanishes, hence the inner sum satisfies
\[
\sum_{(t-\delta)/b<a\le t/b}\chrone(a)\Lambda(a)
\ll\frac{t}{b}\,\er^{-c\sqrt{\frac12\log t}}.
\]
Since $|\chrtwo(b)|\le 1$, it follows from \eqref{eq:range} that
\[
S_b\ll\sum_{b\le\sqrt t}\frac{t}{b}\,\er^{-c\sqrt{\frac12\log t}}
\ll t(\log t)\,\er^{-c\sqrt{\frac12\log t}}\ll\delta.
\]
Thus $S_b=O_Q(\delta)$ in both cases.

Combining the estimates for $S_{ab}$, $S_a$, and $S_b$ completes
the proof of \eqref{eq:kk1}.

\bigskip

To prove \eqref{eq:kk2}, fix a Dirichlet character $\chi$ modulo
$q\le Q$ and observe that
\[
k(n)\chi(n)=\big\{(\chi\chrone\cdot\Lambda)*(\chi\chrtwo)\big\}(n)
=k(\chi\chrone,\chi\chrtwo;n)\qquad(n\in\N),
\]
since $\chi$ is completely multiplicative. Consequently, it suffices
to show that the pair $(\chi\chrone,\chi\chrtwo)$ satisfies the
hypotheses under which \eqref{eq:kk1} was established, with $Q^2$ in
place of $Q$. Since $Q$ is fixed, the resulting bound
$O_{Q^2}(\delta)$ and threshold $t_0(Q^2)$ remain admissible.

The moduli of $\chi\chrone$ and $\chi\chrtwo$ divide $q\qone$ and
$q\qtwo$, respectively, and thus do not exceed $Q^2$. It remains to
verify that $\chi\chrone$ and $\chi\chrtwo$ are not both principal.
Note that the hypothesis of the first assertion does not survive the
twist, as $\chi\chrone$ can be principal even when $\chrone$ is not
(take $\chi=\overline{\chrone}$ with $q=\qone$); this is why the
second assertion of the lemma carries the stronger hypothesis.
Suppose, then, that $\chi\chrone$ and $\chi\chrtwo$ are both
principal. For every integer $n$ coprime to $q\qone\qtwo$ we have
$\chi(n)\chrone(n)=1=\chi(n)\chrtwo(n)$, and since $\chi(n)\ne 0$,
it follows that $\chrone(n)=\chrtwo(n)$ for all such $n$. Two
characters that agree on all integers coprime to a common multiple
of their moduli are induced by the same primitive character, so
$\chrone$ and $\chrtwo$ would be induced by the same primitive
character, contrary to hypothesis. This establishes \eqref{eq:kk2}.

\bigskip

Finally, we establish \eqref{eq:kk3}. Fix $h\in\Z$. Since
$\chrtwo(b)=0$ unless $\gcd(b,\qtwo)=1$, every nonzero summand in
$k(n)=\sum_{ab=n}\chrone(a)\Lambda(a)\chrtwo(b)$ has $b$ coprime to
$\qtwo$ and $a$ a prime power $p^{\,j}$. Hence a nonzero term
$k(n)$ with $\gcd(n,\qtwo)>1$ arises only from $n=p^{\,j}b$ with
$p\mid\qtwo$, and the contribution of these terms satisfies
\dalign{
\ssum{t-\delta<n\le t\\\gcd(n,\qtwo)>1}|k(n)|
&\le\sum_{p\mid\qtwo}\log p\sum_{1\le j\le\frac{\log t}{\log p}}
\ssum{(t-\delta)/p^j<b\le t/p^j\\\gcd(b,\qtwo)=1}1\\
&\ll\sum_{p\mid\qtwo}\log p\sum_{1\le j\le\frac{\log t}{\log p}}
\Big\{\frac{\delta}{p^{\,j}}+1\Big\}
\llsym{Q}\delta+\log t\ll\delta,
}
where we used \eqref{eq:range} in the last step. Sorting the
remaining terms into residue classes modulo $\qtwo$, we obtain
\dalign{
\sum_{t-\delta<n\le t}k(n)\e(-nh/\qtwo)
&=\ssum{t-\delta<n\le t\\\gcd(n,\qtwo)=1}k(n)\e(-nh/\qtwo)+O_Q(\delta)\\
&=\sum_{m\in(\Z/\qtwo\Z)^\times}\e(-mh/\qtwo)
\ssum{t-\delta<n\le t\\n\equiv m\bmod\qtwo}k(n)+O_Q(\delta).
}
For every $m\in(\Z/\qtwo\Z)^\times$, orthogonality gives
\[
\ind{n\equiv m\bmod\qtwo}
=\frac{1}{\phi(\qtwo)}\sum_{\chi\bmod\qtwo}\chi(n)\overline\chi(m)
\qquad(n\in\Z),
\]
and therefore
\[
\sum_{t-\delta<n\le t}k(n)\e(-nh/\qtwo)
=\frac{1}{\phi(\qtwo)}\sum_{\chi\bmod\qtwo}
\bigg\{\sum_{m\bmod\qtwo}\overline\chi(m)\e(-mh/\qtwo)\bigg\}
\sum_{t-\delta<n\le t}k(n)\chi(n)+O_Q(\delta).
\]
The Gauss sum over $m$ is trivially $O_Q(1)$, and \eqref{eq:kk2}
bounds the sum over $n$ by $O_Q(\delta)$ for each of the
$\phi(\qtwo)\le Q$ characters $\chi$; hence the entire expression
is $O_Q(\delta)$. This establishes \eqref{eq:kk3} and completes the
proof of the lemma.
\end{proof}

\subsection{The function $\cM_\chi(s)$}\label{sec:M}

Stirling's formula for the gamma function leads to the estimate
\[
\cM_\chi(1-c-it)={\mathfrak g}(\chi)q^{c-1}\er^{-\pi i/4}
\exp\Big(it\log\Big(\frac{qt}{2\pi\er}\Big)\Big)
\Big(\frac{t}{2\pi}\Big)^{c-1/2}\big\{1+O(t^{-1})\big\},
\]
which is uniform for $c\in[\frac{1}{10},2]$ and $t\ge 1$;
see \cite[Lem.\,2.1]{Banks}.
Moreover, using the method of stationary phase,
Gonek~\cite[Lem.\,2]{Gonek} showed that
\[
\int_a^b\exp\Big(it\log\Big(\frac{t}{u\er}\Big)\Big)
\Big(\frac{t}{2\pi}\Big)^{c-1/2}\dd t
=(2\pi)^{1-c}u^c\er^{-iu+\pi i/4}\cdot\ind{(a,b]}(u)+O(E)
\]
holds uniformly for $c\in[\frac{1}{10},2]$ and $10<a<b\le 2a$,
where the error term $E$ is bounded explicitly in terms of
$a$, $b$, $c$, and $u$.
Combining these results, we arrive at the following variant of a
lemma of Conrey, Ghosh, and Gonek \cite[Lem.\,1]{ConGhoGon}.
It asserts that a contour integral of $\cM_\chi(1-s)v^{-s}$ along a
short vertical segment behaves like a sharp cutoff. The integral is
essentially $\mathfrak g(\chi)q^{-1}\e(-v/q)$ when $2\pi v$ lies in
the interval $(qT_1,qT_2]$ and is negligible otherwise, with the
error term $E$ controlling the transition near the endpoints.

\begin{lemma}\label{lem:ConGhoGon}
Uniformly for $v>0$, $c\in[\frac{1}{10},2]$, and
$10<T_1<T_2\le 2T_1$, we have
\[
\frac{1}{2\pi i}\int_{c+iT_1}^{c+iT_2}\cM_\chi(1-s)v^{-s}\dd s
={\mathfrak g}(\chi)q^{-1}\e(-v/q)\cdot\ind{(qT_1,qT_2]}(2\pi v)+O(E),
\]
where
\[
E\defeq\frac{q^{c-1/2}}{v^c}
\bigg(T^{c-1/2}+\frac{T^{c+1/2}}{|T-2\pi v/q|+T^{1/2}}+1\bigg)
\qquad(T\defeq T_2).
\]
\end{lemma}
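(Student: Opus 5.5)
The plan is to substitute the Stirling asymptotic for $\cM_\chi(1-s)$ on the line $s=c+it$ and then apply Gonek's stationary-phase lemma with a suitable choice of $u$. Write $s=c+it$ with $T_1\le t\le T_2$, so $\dd s=i\,\dd t$ and $v^{-s}=v^{-c}\er^{-it\log v}$. The Stirling estimate gives
\[
\cM_\chi(1-c-it)v^{-c-it}
={\mathfrak g}(\chi)q^{c-1}\er^{-\pi i/4}v^{-c}
\exp\Big(it\log\Big(\frac{qt}{2\pi\er v}\Big)\Big)
\Big(\frac{t}{2\pi}\Big)^{c-1/2}\big\{1+O(t^{-1})\big\}.
\]
Setting $u\defeq 2\pi v/q$, the phase becomes $t\log(t/(u\er))$, which is exactly the phase in Gonek's lemma. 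The factor $\frac{1}{2\pi i}\cdot i=\frac1{2\pi}$ then multiplies the integral.

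The main term comes from Gonek's lemma with $a=T_1$ and $b=T_2$. It yields
\[
\frac{1}{2\pi}{\mathfrak g}(\chi)q^{c-1}\er^{-\pi i/4}v^{-c}(2\pi)^{1-c}u^c\er^{-iu+\pi i/4}\ind{(T_1,T_2]}(u).
\]
Since $u^c=(2\pi v/q)^c$, the powers of $v$ and $2\pi$ cancel, as do the factors $\er^{\pm\pi i/4}$. We are left with ${\mathfrak g}(\chi)q^{-1}\er^{-2\pi i v/q}$, which is ${\mathfrak g}(\chi)q^{-1}\e(-v/q)$. The indicator condition $u\in(T_1,T_2]$ is the same as $2\pi v\in(qT_1,qT_2]$, which matches the stated main term.

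The error terms come from two sources. The first is the $O(t^{-1})$ correction in Stirling, which is bounded trivially by
\[
q^{c-1}v^{-c}\int_{T_1}^{T_2}t^{c-3/2}\,\dd t\ll q^{c-1}v^{-c}T^{c-1/2},
\]
using $T_2\le 2T_1$. This is absorbed by the first summand of $E$, since $q^{c-1}\le q^{c-1/2}$. The second is Gonek's error $E_{\rm G}$, multiplied by $q^{c-1}v^{-c}$. The main obstacle is that the paper only says Gonek's $E$ is "bounded explicitly", so I would quote its form from \cite[Lem.\,2]{Gonek}: it is essentially $\ll T^{c-1/2}+T^{c+1/2}/(|T-u|+T^{1/2})$, possibly with a matching term at $T_1$ and a constant.

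Two points need care when converting this. A term of the form $T_1^{c+1/2}/(|T_1-u|+T_1^{1/2})$ is not dominated by the $T_2$ term when $u$ is near $T_1$. I would handle it by noting that the lemma is applied for $T_1=T_2/2$ or for adjacent dyadic pieces, or, failing that, by checking that Conrey--Ghosh--Gonek's version records only the upper endpoint. The substitution $u=2\pi v/q$ introduces factors of $q$ that are harmless, since $q^{c-1}\le q^{c-1/2}$ and $|T-2\pi v/q|$ appears directly. The uniformity in $c\in[\frac1{10},2]$ is inherited from both cited results. Collecting the pieces gives the stated $O(E)$.
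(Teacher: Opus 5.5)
You follow the route the paper indicates: insert the Stirling formula for $\cM_\chi(1-c-it)$, put $u=2\pi v/q$, and apply Gonek's Lemma~2. Your main-term algebra is correct. One bookkeeping slip: every error term carries the factor $|\mathfrak{g}(\chi)|=q^{1/2}$, so the Stirling error is $\ll q^{c-1/2}v^{-c}T^{c-1/2}$ exactly. Your appeal to $q^{c-1}\le q^{c-1/2}$ only works because that slack happens to be the missing $q^{1/2}$.

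The genuine gap is the lower-endpoint term $T_1^{c+1/2}/(|T_1-u|+T_1^{1/2})$, and neither of your remedies disposes of it. In \S3 the lemma is used with $T_1=T-\Delta$, not $T_1=T_2/2$. Even when $T_1=T_2/2$, the term is not dominated for $u$ near $T_1$. Splitting into dyadic pieces only creates further lower endpoints. No reference can help either, because the statement as written fails near $2\pi v/q=T_1$. The left side is continuous in $v$, whereas the main term jumps by $\mathfrak{g}(\chi)q^{-1}\e(-v/q)$, of modulus $q^{-1/2}$, as $2\pi v$ crosses $qT_1$. But for $T_2=2T_1$ and $|2\pi v/q-T_1|\le 1$ one has $E\ll q^{-1/2}(T^{-1/2}+T^{-c})\ll q^{-1/2}T^{-1/10}$, which cannot absorb that jump once $T$ is large. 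At $2\pi v=qT_2$, by contrast, the analogous jump is absorbed by the term $T^{c+1/2}/(|T-2\pi v/q|+T^{1/2})$, which makes $E\asymp q^{-1/2}$ there. The paper's one-line ``combining these results'' derivation drops the lower-endpoint term as well.

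What your argument actually proves, and what you should state, is the lemma with $E$ enlarged by $\frac{q^{c-1/2}}{v^c}\cdot\frac{T_1^{c+1/2}}{|T_1-2\pi v/q|+T_1^{1/2}}$. That version suffices for the theorem. In the bound for $I_3$, the extra error sum is estimated exactly like $E_2$ with $T_1$ in place of $T$, and is again $\ll_{Q,x}T^{0.95}(\log T)^2\ll\Delta$.
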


{\Large\section{Proof of Theorem~\ref{thm:BLmain}}\label{sec:BLmain}}

For convenience, we restate the assertion \eqref{eq:BLest} of
Theorem~\ref{thm:BLmain}. For every $\Delta$ satisfying
$T\,\er^{-C\sqrt{\log T}}\le\Delta\le T/\log T$, we have
\[
\ssum{\rho=\beta+i\gamma\\T-\Delta<\gamma\le T}
x^\rho L(\rho,\chrtwo)
=\frac{\chrtwo(x)}{2\pi}\,\Delta\log T
+O(\Delta).
\]

\begin{proof}
Fix a positive constant $C < \widetilde C$, where $\widetilde C$ is the
constant in Lemma~\ref{lem:crucial}. Recalling the convention
given in \S\ref{sec:notation}, we continue to work in
the region $\cR\defeq\{s\in\C : t\ge 10\}$.

We fix notation that will be used throughout the proof, writing
\[
\Lone\defeq\log\frac{\qone T}{2\pi}
\mand
\Ltwo\defeq\log\frac{\qtwo T}{2\pi},
\]
and defining
\[
 T_2\defeq T\mand T_1\defeq T-\Delta,
\]
so that the sum in \eqref{eq:BLest} takes the form
\[
\ssum{\rho=\beta+i\gamma\\T_1<\gamma\le T_2}
x^\rho L(\rho,\chrtwo).
\]

In view of \eqref{eq:L2_hist}, we may choose ordinates
$T_j^\star\in[T_j,T_j+1]$ for $j\in\{1,2\}$ at which $L'/L$ satisfies
\be\label{eq:L2_bd2}
\frac{L'}{L}(\sigma\pm iT_j^\star,\chrone)
\ll \Lone^2\qquad (\sigma\in[-1,2],~j\in\{1,2\}).
\ee
The bound \eqref{eq:L2_bd2} is available only at these special ordinates, not
at the prescribed $T_j$. We therefore run the contour argument below with
$T_1^\star,T_2^\star$ in place of $T_1,T_2$, proving the asymptotic for the
shifted window $T_1^\star<\gamma\le T_2^\star$ of width
$\Delta^\star\defeq T_2^\star-T_1^\star=\Delta+O(1)$, and then we 
correct back to the original window in two steps.

First, the sum on the left-hand side of \eqref{eq:BLest} differs
from the corresponding sum with $T_1^\star$ and $T_2^\star$ by
\[
\ssum{\rho=\beta+i\gamma\\T_1<\gamma\le T_1^\star}
x^\rho L(\rho,\chrtwo)
-\ssum{\rho=\beta+i\gamma\\T_2<\gamma\le T_2^\star}
x^\rho L(\rho,\chrtwo).
\]
Since both sums run over at most $O(\Lone)$ zeros
(see \eqref{eq:L1_hist}), it follows from \eqref{eq:L6_hist} that
\[
\ssum{\rho=\beta+i\gamma\\T_j<\gamma\le T_j^\star}
x^\rho L(\rho,\chrtwo)
\ll\Lone\max_{\substack{0\le\sigma\le 1\\T_j<t\le T_j^\star}}
\big\{x^\sigma\big|L(\sigma+it,\chrtwo)\big|\big\}
\ll\big(x+(\qtwo T)^{1/2}\big)\Lone\Ltwo.
\]
This bound is admissible, as the last term
is $O_{Q,x}(T^{1/2}(\log T)^2)$,
whereas $\Delta\ge T^{1/2}(\log T)^2$ for all large $T$.
Second, replacing $\Delta^\star$ by $\Delta$ in the main term
$\frac{\chrtwo(x)}{2\pi}\Delta^\star\log T$
produced below changes it by
\[
\frac{\chrtwo(x)}{2\pi}\,(\Delta^\star-\Delta)\log T=O(\log T)
\llsym{Q,x}\Delta.
\]
Both corrections can therefore be absorbed into the error term of
\eqref{eq:BLest}, and we may proceed with \eqref{eq:L2_bd2}.

For the remainder of the proof we work with the shifted window, writing
$T_1,T_2$ for $T_1^\star,T_2^\star$ and $\Delta$ for $\Delta^\star$. 
The two corrections above show that the resulting asymptotic
for the shifted window
implies \eqref{eq:BLest} for the original one.

The function
\be\label{eq:Kdefn}
K(s)
\defeq -\frac{L'}{L}(1-s,\ochrone)\cdot L(s,\chrtwo)
\ee
has a meromorphic continuation to the entire complex plane.
Its poles in the region $\cR$ are simple and located at those zeros
$\rho$ of $L(s,\chrone)$ that are \emph{not}
zeros of $L(s,\chrtwo)$. The
residue of $K(s)x^s$ at such a $\rho$ is
$m_\rho\,x^\rho L(\rho,\chrtwo)$,
where $m_\rho$ denotes the multiplicity of $\rho$,
so summing over $\rho$ by
Cauchy's theorem reproduces the left-hand side of \eqref{eq:BLest},
in which each zero is counted with its multiplicity.
In particular, when $\chrtwo=\chrone$ the function $K$ has no poles in
$\cR$ at all, since every zero $\rho$ of $L(s,\chrone)$ is then
also a zero of $L(s,\chrtwo)$.
In that case the asymptotic \eqref{eq:BLest} would assert
a nonzero main term for a sum that is identically zero. This is why the
hypothesis $\chrone\ne\chrtwo$ cannot be dropped.

From now on, we write
\[
c\defeq 1+\frac{1}{\log x}
\]
and denote by $\cC$ the oriented rectangle depicted in Figure~\ref{fig:contour}, namely
\[
\cC:\quad
c+iT_1
~~\longrightarrow~~c+iT_2
~~\longrightarrow~~1-c+iT_2
~~\longrightarrow~~1-c+iT_1
~~\longrightarrow~~c+iT_1.
\]

\begin{figure}[H]
    \centering
    \includegraphics[width=4.75in]{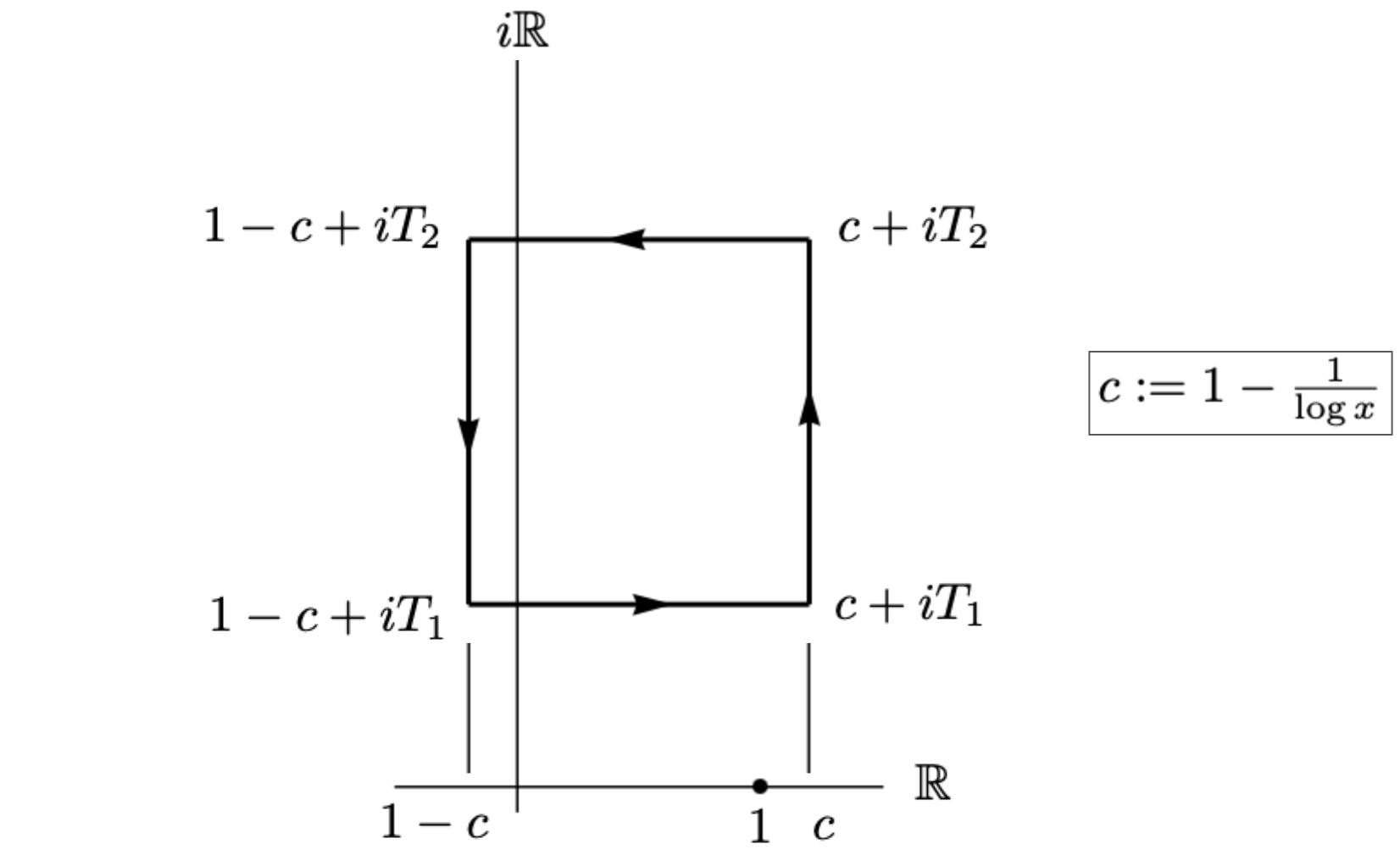}
    \caption{Contour of integration $\cC$.}
    \label{fig:contour}
\end{figure}

By Cauchy's theorem,
\dalign{
\ssum{\rho=\beta+i\gamma\\T_1<\gamma\le T_2}x^\rho L(\rho,\chrtwo)
&=\frac{1}{2\pi i}\oint_\cC K(s) x^s\dd s\\
&=\frac{1}{2\pi i}\bigg(\int_{c+iT_1}^{c+iT_2}
+\int_{c+iT_2}^{1-c+iT_2}
+\int_{1-c+iT_2}^{1-c+iT_1}
+\int_{1-c+iT_1}^{c+iT_1}\bigg)
K(s) x^s\dd s\\
&=I_1+I_2+I_3+I_4\quad\text{(say)}.
}

We estimate each term $I_j$ individually.

\medskip

\noindent{\bf Estimate for $I_1$}.
Using \eqref{eq:L7_hist} and standard approximations for
the digamma, cotangent, and logarithm functions, we
obtain the uniform estimate
\[
\frac{L'}{L}(1-s,\ochrone)
=-\frac{L'}{L}(s,\chrone)-\log\frac{\qone}{2\pi}
-\log t+O(t^{-1})\qquad(t\ge 10),
\]
which implies that
\[
\frac{L'}{L}(1-s,\ochrone)
=-\frac{L'}{L}(s,\chrone)-\Lone+O(\Delta T^{-1})
\qquad(T_1\le t\le T_2).
\]
Thus, invoking \eqref{eq:Kdefn}, we find that
\dalign{
I_1&=\frac{-1}{2\pi}\int_{T_1}^{T_2}
\frac{L'}{L}(1-c-it,\ochrone)  L(c+it,\chrtwo) x^{c+it}\dd t\\
&=\frac{1}{2\pi}\int_{T_1}^{T_2}
\bigg\{\frac{L'}{L}(c+it,\chrone)
+\Lone+O(\Delta T^{-1})\bigg\}
L(c+it,\chrtwo) x^{c+it}\dd t\\
&=I_{1a}+I_{1b}+O(I_{1c})\quad\text{(say)}.
}
Using the Dirichlet series representation
\be\label{eq:Dirseries}
\frac{L'}{L}(s,\chrone)\cdot L(s,\chrtwo)
=-\sum_{n\in\N}\frac{k(n)}{n^s}\qquad(\sigma>1),
\ee
where $k$ is the arithmetical function introduced in \eqref{eq:kdef},
\[
k(n)=\big\{(\chrone\cdot\Lambda)*\chrtwo\big\}(n)
=\ssum{a,b\in\N\\ab=n}\chrone(a)\Lambda(a)\chrtwo(b),
\]
we obtain the estimate
\dalign{
I_{1a}&=\frac{1}{2\pi}\int_{T_1}^{T_2}
\frac{L'}{L}(c+it,\chrone)
L(c+it,\chrtwo) x^{c+it}\dd t
=-\frac{x^c}{2\pi}\sum_{n\ge 2}\frac{k(n)}{n^c}
\int_{T_1}^{T_2}\Big(\frac{x}{n}\Big)^{it}\dd t\\
&=-\frac{\Delta}{2\pi} k(x)
+O\bigg(x^c\ssum{n\ge 2\\n\ne x}
\frac{\log n}{n^c|\log(x/n)|}\bigg).
}
Here, the term $n=x$ contributes $\int_{T_1}^{T_2}\dd t=\Delta$ and
produces the first term on the right, whereas for each $n\ne x$,
\[
\int_{T_1}^{T_2}\Big(\frac{x}{n}\Big)^{it}\dd t
=\frac{(x/n)^{iT_2}-(x/n)^{iT_1}}{i\log(x/n)}\ll\frac{1}{|\log(x/n)|}.
\]
The triangle inequality gives
\[
\big|k(n)\big|\le \ssum{a,b\in\N\\ab=n}\Lambda(a)=\log n.
\]
Applying Lemma~\ref{lem:Gkappa}, and noting that
$x^c=\er x\asymp x$, we find that
\[
I_{1a}=-\frac{\Delta}{2\pi} k(x)+O(x(\log x)^2)
\llsym{Q,x}\Delta.
\]
By the same evaluation of the inner integral, we have
\dalign{
I_{1b}&=\frac{\Lone}{2\pi}\int_{T_1}^{T_2}
L(c+it,\chrtwo) x^{c+it}\dd t
=\frac{x^c\Lone}{2\pi}
\sum_{n\ge 1}\frac{\chrtwo(n)}{n^c}
\int_{T_1}^{T_2}\Big(\frac{x}{n}\Big)^{it}\dd t\\
&=\frac{\Delta\Lone}{2\pi}\chrtwo(x)
+O\bigg(x\Lone\ssum{n\ge 1\\n\ne x}
\frac{1}{n^c|\log(x/n)|}\bigg)\\
&=\frac{\Delta\Lone}{2\pi}\chrtwo(x)+O(x\log x\cdot\Lone),
}
where the final step follows from Lemma~\ref{lem:Gkappa} applied to
the terms with $n\ge 2$, the term $n=1$
contributing only $(\log x)^{-1}$. Consequently,
\[
I_{1b}=\frac{\chrtwo(x)}{2\pi}\,\Delta\log T+O_{Q,x}(\Delta).
\]
Finally, \eqref{eq:L6_hist} gives
\[
I_{1c}\ll \frac{\Delta}{T}\,x\int_{T_1}^{T_2}
\big|L(c+it,\chrtwo)\big|\dd t
\ll \frac{\Delta}{T}\,x\int_{T_1}^{T_2}\log\qtwo t\dd t
\ll\frac{\Delta^2 x\Ltwo}{T},
\]
and since $\Delta\le T/\log T$ by hypothesis, we conclude that
\[
I_{1c}\llsym{Q,x}\Delta.
\]
Putting everything together, we have shown that
\be\label{eq:I1result}
I_1=\frac{\chrtwo(x)}{2\pi}\,\Delta\log T+O_{Q,x}(\Delta).
\ee

\bigskip\noindent{\bf Bounds for $I_2$ and $I_4$}.
In view of \eqref{eq:L2_bd2} for $T_2$, we have
\dalign{
I_2&=\frac{1}{2\pi i}\int_{1-c}^c
\frac{L'}{L}(1-\sigma-iT_2,\ochrone)
L(\sigma+iT_2,\chrtwo) x^{\sigma+iT_2} \dd\sigma\\
&\ll\Lone^2\int_{1-c}^c
\big|L(\sigma+iT_2,\chrtwo)\big| x^\sigma\dd\sigma.
}
Applying \eqref{eq:L6_hist} to the second integral, it follows that
\[
I_2\ll(x+(\qtwo T)^{1/2})\Lone^2\Ltwo\,(\qtwo T)^{c-1}.
\]
Since $x\ge 10$, we have
\[
(\qtwo T)^{c-1}=(\qtwo T)^{1/\log x}
\llsym{Q}T^{0.45},
\]
and therefore
\be\label{eq:I2result}
I_2\llsym{Q,x}T^{0.95}(\log T)^3\ll\Delta.
\ee
The edge $I_4$ is the segment at height $T_1$, traversed from $1-c+iT_1$ to
$c+iT_1$. The estimate is identical to that for $I_2$, now using
\eqref{eq:L2_bd2} with $T_1$ to bound $\frac{L'}{L}(1-\sigma-iT_1,\ochrone)$.
We conclude similarly that
\be\label{eq:I4result}
I_4\llsym{Q,x}\Delta.
\ee

\bigskip\noindent{\bf Bound for $I_3$}. Finally, we bound $I_3$,
working with its complex conjugate. Applying the functional equation
\eqref{eq:L3_hist} to $L(s,\chrtwo)$
and the change of variable $s\mapsto 1-s$, we find that
\[
\overline{I_3}=\frac{1}{2\pi i}\int_{c+iT_1}^{c+iT_2}
\frac{L'}{L}(s,\chrone)\cM_{\ochrtwo}(1-s)L(s,\chrtwo)x^{1-s}\dd s.
\]
Recalling \eqref{eq:Dirseries}, we get
\[
\overline{I_3}=\frac{-x}{2\pi i}\sum_{n\in\N}k(n)\int_{c+iT_1}^{c+iT_2}
\cM_{\ochrtwo}(1-s)(nx)^{-s}\dd s.
\]
Applying Lemma~\ref{lem:ConGhoGon} to each integral,
and noting that $\tfrac12<c<2$, we deduce that
\be\label{eq:angst0}
\overline{I_3}=M+O(E_1+E_2),
\ee
where
\[
M\defeq\frac{-x{\mathfrak g}(\ochrtwo)}{\qtwo}
\ssum{n\in\N\\\qtwo T_1<2\pi nx\le \qtwo T_2}k(n)\e(-nx/\qtwo),
\]
and, using $|k(n)|\le\log n$, the error terms are
\dalign{
E_1&\defeq\sum_{n\in\N}(\log n)\frac{(\qtwo T)^{c-1/2}}{(nx)^c},\\
E_2&\defeq\sum_{n\in\N}(\log n)\frac{(\qtwo T)^{c-1/2}}{(nx)^c}
\frac{T}{|T-2\pi nx/\qtwo|+T^{1/2}}.
}

First, we bound $M$. Setting
$\tau\defeq\frac{\qtwo T}{2\pi x}$ and $\delta\defeq\frac{\qtwo\Delta}{2\pi x}$,
we have
\[
M=-\frac{x\,{\mathfrak g}(\ochrtwo)}{\qtwo}
\sum_{\tau-\delta<n\le\tau}k(n)\e(-nx/\qtwo)
\llsym{Q}x\,\bigg|\sum_{\tau-\delta<n\le\tau}k(n)\e(-nx/\qtwo)\bigg|
\]
since the Gauss sum is $O_Q(1)$.
To bound the above sum, we invoke \eqref{eq:kk3} of Lemma~\ref{lem:crucial},
taking $h$ to be the integer $x$ of Theorem~\ref{thm:BLmain}. The hypothesis of
the lemma is met because $\chrone$ and $\chrtwo$ are distinct primitive characters;
this is the only point in the proof at which the assumption $\chrone\ne\chrtwo$ is
used. To apply \eqref{eq:kk3} we must also check that
$\tau\,\er^{-\widetilde C\sqrt{\log\tau}}\le\delta\le\tau$, i.e., that
\[
\frac{\qtwo T}{2\pi x}\,
\exp\!\Big\{-\widetilde C\sqrt{\log\tfrac{\qtwo T}{2\pi x}}\Big\}
\le\frac{\qtwo\Delta}{2\pi x}\le\frac{\qtwo T}{2\pi x}.
\]
Recalling that $C<\widetilde C$ and that
$T\,\er^{-C\sqrt{\log T}}\le\Delta\le T/\log T$, these inequalities are
easily verified for all $T\ge T_0(Q,x)$, as is the threshold
requirement $\tau\ge t_0(Q)$ of the lemma. Hence \eqref{eq:kk3}
applies and yields
\[
\sum_{\tau-\delta<n\le\tau}k(n)\e(-nx/\qtwo)
\llsym{Q}\delta=\frac{\qtwo\Delta}{2\pi x}\llsym{Q}\frac{\Delta}{x}.
\]
We conclude that
\be\label{eq:angst1}
M\llsym{Q}\Delta.
\ee

We pause to record what fails in the degenerate case
$\chrone=\chrtwo$. There the two characters are induced by the same
primitive character, so \eqref{eq:kk3} is no longer available; indeed,
by Remarks~\ref{chi0fails} and~\ref{chi0fails2}, the bound weakens to
$O_Q(\delta\log t)$, and $M$ acquires a genuine main term of size
$\asymp\Delta\log T$. This term would cancel the main term of
\eqref{eq:I1result} arising from $I_{1b}$, consistent with the fact
that the left-hand side of \eqref{eq:BLest} vanishes identically when
$\chrone=\chrtwo$.

Next, we bound the error terms of \eqref{eq:angst0}.
As in the estimate for $I_2$, the hypothesis $x\ge 10$ implies that
\[
(\qtwo T)^{c-1/2}=(\qtwo T)^{1/2}(\qtwo T)^{1/\log x}
\llsym{Q}T^{0.95},
\]
and therefore
\be\label{eq:dog}
E_1\asymp \frac{(\qtwo T)^{c-1/2}}{x}\big|\zeta'(c)\big|
\ll\frac{(\qtwo T)^{c-1/2}}{x}\,(\log x)^2
\llsym{Q,x}\Delta.
\ee
Also,
\be\label{eq:cat}
E_2\asymp \frac{(\qtwo T)^{c-1/2}}{x}\sum_{n\in\N}\frac{\log n}{n^c}
\frac{T}{|T-2\pi nx/\qtwo|+T^{1/2}}.
\ee
To estimate this last sum, we adapt the argument of \cite[pp.\,7--8]{Banks},
partitioning the integers $n\ge 2$ into the three disjoint sets
\dalign{
S_1&\defeq\{n\ge 2:|T-2\pi nx/\qtwo|>\tfrac12T\},\\
S_2&\defeq\{n\ge 2:|T-2\pi nx/\qtwo|\le T^{1/2}\},\\
S_3&\defeq\{n\ge 2:T^{1/2}<|T-2\pi nx/\qtwo|\le \tfrac12T\}.
}
First, the sum over $S_1$ satisfies
\[
\Sigma_1\defeq\sum_{n\in S_1}\frac{\log n}{n^c}\frac{T}{|T-2\pi nx/\qtwo|+T^{1/2}}
\ll\sum_{n\in S_1}\frac{\log n}{n^c}\le\big|\zeta'(c)\big|\ll(\log x)^2.
\]
Similarly,
\[
\Sigma_2\defeq\sum_{n\in S_2}\frac{\log n}{n^c}\frac{T}{|T-2\pi nx/\qtwo|+T^{1/2}}
\ll T^{1/2}\sum_{n\in S_2}\frac{\log n}{n^c}.
\]
For each $n\in S_2$, we have
\[
n=\frac{\qtwo T}{2\pi x}(1+O(T^{-1/2})),\qquad
\log n=\Ltwo-\log x+O(T^{-1/2})\ll\Ltwo,\qquad
n^c\ge n\gg\frac{\qtwo T}{x}.
\]
These estimates, combined with the bound
$|S_2|\ll\qtwo T^{1/2}/x+1$, yield
\[
\Sigma_2\ll\Ltwo+\frac{x\Ltwo}{\qtwo T^{1/2}}.
\]
It remains to treat the sum over $S_3$. We write
\[
\Sigma_3\defeq\sum_{n\in S_3}\frac{\log n}{n^c}\frac{T}{|T-2\pi nx/\qtwo|+T^{1/2}}
\le \sum_{\lfloor T^{1/2}\rfloor\le k\le \frac12 T}\Sigma_{3,k},
\]
where $\Sigma_{3,k}$ denotes the subsum over the set $S_{3,k}$ of
integers $n$ satisfying
\[
k\le |T-2\pi nx/\qtwo|\le k+1.
\]
For each $n\in S_{3,k}$, one has
\[
n=\frac{\qtwo T\pm\qtwo k}{2\pi x}+O(\qtwo/x),~\quad
\log n\ll\Ltwo,~\quad
n^c\ge n\gg\frac{\qtwo T}{x},~\quad
\frac{T}{|T-2\pi nx/\qtwo|+T^{1/2}}\ll\frac{T}{k}.
\]
Together with the bound $|S_{3,k}|\ll\qtwo/x+1$, these give
\[
\Sigma_{3,k}\ll\Big(\Ltwo+\frac{x\Ltwo}{\qtwo}\Big)\frac{1}{k}.
\]
Summing over $k$ gives
\[
\Sigma_3\ll\Big(\Ltwo+\frac{x\Ltwo}{\qtwo}\Big)\log T.
\]
Collecting the bounds on $\Sigma_1$, $\Sigma_2$,
and $\Sigma_3$ and applying \eqref{eq:cat}, we conclude that
\be\label{eq:angst2}
E_2\ll\frac{(\qtwo T)^{c-1/2}}{x}(\log x)^2
+\Big(\frac{1}{\qtwo}
+\frac{1}{x}\Big)(\qtwo T)^{c-1/2}\Ltwo\log T
\llsym{Q,x}\Delta.
\ee

Substituting \eqref{eq:angst1}, \eqref{eq:dog}, and \eqref{eq:angst2} into
\eqref{eq:angst0} yields
\be\label{eq:I3result}
I_3\llsym{Q,x}\Delta.
\ee
The theorem now follows by combining
\eqref{eq:I1result}, \eqref{eq:I2result}, \eqref{eq:I4result},
and \eqref{eq:I3result}.
\end{proof}

\end{document}